\numberwithin{equation}{section}
\theoremstyle{plain}
\newtheorem{lemma}{Lemma}[section]
\newtheorem{theorem}{Theorem}[section]
\newcommand{\beq}{\begin{equation}}
\newcommand{\eeq}{\end{equation}}
\newcommand{\beqs}{\begin{eqnarray*}}
\newcommand{\eeqs}{\end{eqnarray*}}
\newcommand{\beqn}{\begin{eqnarray}}
\newcommand{\eeqn}{\end{eqnarray}}
\newcommand{\beqa}{\begin{array}}
\newcommand{\eeqa}{\end{array}}
\def\S{\mathbb S}
\def\R{\mathbb R}
\def\M{\mathcal M}
\def\Om{\Omega}
\def\p{\partial}
\def\D{\nabla}
\def\s{\sigma_{\S^n}}
\def\dist{\text{dist}}
\def\ve{\varepsilon}
\def\lan{\langle}
\def\ran{\rangle}
\def\e{{\bf e}}
\begin{document}
\title{The Christoffel problem \\ by fundamental solution of the Laplace equation}

\author{Qi-Rui Li}
\address{Qi-Rui Li:
School of Mathematical Sciences, Zhejiang University, Hangzhou 310027, China;
and
Centre for Mathematics and Its Applications, the Australian National University, Canberra, ACT 2601, Australia.}
\email{qi-rui.li@zju.edu.cn}

\author{Dongrui Wan}
\address{Dongrui Wan: College of Mathematics and Statistics, Shenzhen University,  Shenzhen 518060, China}
\email{wandongrui@szu.edu.cn}

\author{Xu-Jia Wang}
\address{Xu-Jia Wang: 
Centre for Mathematics and Its Applications, the Australian National University, Canberra, ACT 2601, Australia.}
\email{xu-jia.wang@anu.edu.au}

\keywords{}

\subjclass[2010]{35J99, 53A99}

\begin{abstract}
The Christoffel problem is equivalent to the existence of convex solutions 
to the Laplace equation on the unit sphere $\S^n$. 
Necessary and sufficient conditions have been found by Firey \cite{F67} and Berg \cite {B69},
using the Green function of the Laplacian on the sphere.
Expressing the Christoffel problem as the Laplace equation on the entire space $\R^{n+1}$,
we observe that the second derivatives of the solution 
can be given by the fundamental solutions of the Laplace equations.
Therefore we find new and simpler necessary and sufficient conditions for the solvability of the Christoffel problem.
We also study the $L_p$ extension of the Christoffel problem and provide sufficient conditions
for  the problem,  for the case $p\ge 2$.
\end{abstract}

\maketitle

\baselineskip16.4pt
\parskip3pt


\section{Introduction}

Given a positive function $f$ on the unit sphere $\S^n$,
the Christoffel problem concerns the existence of a closed convex body $\Om\subset \R^{n+1}$ 
such that the sum of the principal curvature radii of $\M = \p\Om$ at $p$ is equal to $f(x)$,
where $x$ is the unit outward normal of $\M$ at $p$.
This problem has been studied by many authors \cite{C65, Bu, Hur02, Hil12, Su33, Alex37, Pog53}. 
Firey \cite{F67} and Berg \cite {B69} finally obtained
necessary and sufficient conditions for the solvability of the problem.
We refer the readers to \cite{F67, GYY, GM03} for more details on early works on the Christoffel problem.
In particular a nice sufficient condition was found in \cite{GM03}.

Let $u(x)=\sup\{x\cdot p\ |\ p\in\Om\}$, where $x\in \S^n$,
be the support function of $\Om$. 
It is well known that the eigenvalues of the Hessian matrix $U=:\{\D^2 u(x)+u(x)I\}$
are the principal radii of $\M$ at $p$,
where $\D$ denotes the derivative with respect to an orthonormal frame on $\S^n$.
Hence Christoffel's problem is equivalent to the existence of convex solutions  to  
\beq\label{e1}
\Delta_{\S^n} u + n u  = f \ \ \text{on} \ \S^n,
\eeq
where $\Delta_{\S^n}$ is the Laplacian on $\S^n$. 
A necessary and sufficient condition for the existence of solutions to \eqref{e1}  is that 
 $f$ is orthogonal to the kernel of the operator $\Delta+nI$ on $\S^n$, namely
\beq\label{OT}
 \int_{\S^n} x_i f(x) dx = 0 , \ \ i=1,\dots, n+1.
\eeq

The key point to solve Christoffel's problem is to find conditions such that the solution is convex, 
namely the matrix  $U\ge0$. 
Firey \cite{F67} derived the Green function $G_{\S^n}(x,y)$ of the operator  $\Delta_{\S^n} + n I $ on $\S^n$.
Hence the solution to \eqref{e1} is given by $u(x)=\int_{\S^n} G_{\S^n}(x, y) f(y) dy$, 
and the matrix  $U\ge 0$ is equivalent to  
\beq \label{cr}
\Big\{  \int_{\S^n} \p_{x_i} G_{\S^n}(x, y) \p_{y_j} f (y) dy \Big\} \ge 0 . 
\eeq
Unfortunately the Green function on $\S^n$ is not explicitly given,
and the condition \eqref{cr} is not easy to verify.
We remark that $U\equiv 0$ if and only if $u$ is linear, i.e., $\Om$ is a point. 
But we assume that $f>0$, so this case does not occur.

Berg \cite{B69} studied Christoffel's problem after Firey. 
He deduced a recursion relation on the dimension for the expression of solutions.
In \cite{GYY} the authors re-created  Berg's recursion relation by a different method. 
But again the recursion formulas in  \cite{F67, GYY} are rather complicated,
and not easy to verify.
We will state the conditions of Firey \cite {F67} and Berg \cite{B69} in Remark 3.1 
for reader's convenience.

In this paper we observe that the second derivatives of the solution can be expressed by the
fundamental solution of the Laplacian operator on $\R^{n+1}$,
instead of the Green function on $\S^n$.
Therefore we found much simpler conditions for the convexity of solutions.  
More precisely, 
we extend $u$ to $\R^{n+1}$ such that it is homogeneous of degree $1$, 
and extend $f$ to $\R^{n+1}$ such that it is homogeneous of degree $-1$.
Then equation \eqref{e1} is equivalent to  
\beq\label{e2}
\Delta u  = f \ \ \text{on} \ \R^{n+1}.
\eeq
Note that $u$ and $f$ have a singular point at $0$, so that $u$ is a weak solution in $W^{2,p}_{loc}(\R^{n+1})$ for $p\in (1, n)$.
It is amazing to see that Christoffel's problem is equivalent to the existence of convex solutions to
such a simple equation   \eqref{e2}.
Note that if $u$ is a convex solution to \eqref{e2}, then $\M$ can be recovered from $u$ by
$\M=\{Du(x)\ |\ x\in \S^n\}$.

If $n=1$, $\M$ is a curve in $\R^2$.
In this case, $U(x)=f(x)$
and  the solution is convex if and only if $f(x)\ge 0$ but  $f\not\equiv 0$. 
In the following we focus on the case $n\ge2$.
Denote
\beq\label{wr}
w_{R}(x) = \int_{B_R(0)} F(x,y) f(y) d y, 
\eeq
where $F(x,y)$ is the   fundamental solution of the Laplace equation in $\R^{n+1}$,
\beq\label{fs}
F(x,y) = \frac{1}{(1-n)\omega_n}|x-y|^{1-n},
\eeq
and $\omega_n=|\S^n|$.
For any fixed $x$, when $R$ is sufficiently large, we have $F(x, y) f(y) = O(|y|^{-n})$. 
Hence $w_R =O(R)$ in $B_R(0)$.  
Noting that $u$ is homogeneous of degree 1, the harmonic function $h_R =: u-w_R =O(R)$ in $B_R(0)$. 
Hence we have $D h_R=O(1)$ and $D^2 h_R=O(R^{-1})$  in $B_R(0)$.
Therefore if the limits $w_{ij} (x)=:\lim_{R\to\infty} \frac{\p^2 w_R}{\p x_i \p x_j} (x)$ exist, we have
\beq\label{uw}
  u_{x_ix_j}(x)=w_{ij}(x)\ \ \forall\ x\in\R^{n+1}.
\eeq
Note that on the right hand side,  $i, j$ are understood as subscripts, not derivatives.

We compute the first derivative of $w_R$,
\beq\label{1st}
 \begin{split}
D_iw_R(x) & =  \int_{B_R(0)}  f(y) F_{x_i}(x, y)dy =-  \int_{B_R(0)}  f(y)F_{y_i}(x, y) dy \\
             & = - \int_{\p B_R(0)} f(y) F(x, y) \gamma_i  + \int_{B_R(0)}  f_{y_i} F(x, y)dy,
 \end{split}
\eeq
and the second derivatives
\beq\label{2nd}
D_{ij}w_R(x) = - \int_{\p B_R(0)} f(y) F_{x_j}(x, y) \gamma_i + \int_{B_R(0)}  f_{y_i} F_{x_j}(x, y)dy ,
\eeq
where $\gamma$ is the unit normal to $\p B_R(0)$.
The first integral in \eqref{2nd}            
$$  \int_{\p B_R}f(y)  F_{x_j}(x, y) \gamma_i 
=\frac{1}{\omega_n} \int_{\p B_R}   \frac {f(y) (x_j-y_j)\gamma_i}{|x-y|^{n+1}} =O(R^{-1}) \to 0.$$                     
Hence               
\beq\label{wij}
{\begin{split} 
\lim_{R\to\infty} D_{ij}w_R(x)  
                       & =  \int_{\R^{n+1}}  f_{y_i} F_{x_j}(x, y) dy\\
                       & = \frac{1}{\omega_n} \int_{\R^{n+1}}  \frac {(x_j-y_j)f_{y_i}}{|x-y|^{n+1}} =: w_{ij} (x) .
\end{split}}
\eeq
For any point $x\in \S^n$, in order that $u$ is convex at $x$, by the homogeneity of $u$,
it suffices to verify that for any unit vector $\xi=(\xi_1,\cdots,\xi_{n+1})$ satisfying $\xi\perp x$,
there holds
$\sum_{i, j=1}^{n+1} w_{ij}\xi_i\xi_j \ge 0$.
Therefore we obtain the following criterion for the convexity of solutions to \eqref{e2}.

\begin{theorem}\label{T1.1} 
Suppose $n\ge 2$ and $f$ is a positive and Lipschitz continuous function.
The solution $u$ is convex
if and only if \ $\forall\ x\in \S^n$ and $\forall$ unit vector $\xi\perp x$, there holds
\beq\label{cr1}
\int_{\R^{n+1}}  \frac {-\lan \xi, y\ran f_\xi}{|x-y|^{n+1}}dy \ge 0.
\eeq
\end{theorem}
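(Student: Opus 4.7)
The plan is to combine the pointwise identity $u_{x_ix_j}(x)=w_{ij}(x)$ of \eqref{uw}, the explicit formula \eqref{wij} for $w_{ij}$, and the homogeneity reduction of convexity to the tangent sphere already noted in the paragraph preceding the statement. All that remains is an algebraic identification of the quadratic form $\sum w_{ij}\xi_i\xi_j$ with the integral in \eqref{cr1}, together with a check that this integral converges absolutely.

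For the main step, I contract \eqref{wij} against $\xi_i\xi_j$ and group the $\xi_j$ with $x_j-y_j$ and the $\xi_i$ with $f_{y_i}$, obtaining
\[
\sum_{i,j=1}^{n+1}w_{ij}(x)\,\xi_i\xi_j
=\frac{1}{\omega_n}\int_{\R^{n+1}}\frac{\langle x-y,\xi\rangle\,f_\xi(y)}{|x-y|^{n+1}}\,dy,
\]
where $f_\xi:=\langle\xi,\nabla f\rangle$ is the directional derivative of $f$. For $x\in\S^n$ and a unit vector $\xi\perp x$ one has
\[
\langle x-y,\xi\rangle=\langle x,\xi\rangle-\langle y,\xi\rangle=-\langle y,\xi\rangle,
\]
so the right-hand side equals $\omega_n^{-1}$ times the integral in \eqref{cr1}. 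Combined with \eqref{uw} and the observation that convexity of the degree-one homogeneous $u$ on $\R^{n+1}\setminus\{0\}$ is equivalent to $D^2u(x)\xi\cdot\xi\ge 0$ for every $x\in\S^n$ and every unit $\xi\perp x$ (because Euler's identity forces $D^2u(x)\,x=0$ and the Hessian rescales as $D^2u(\lambda x)=\lambda^{-1}D^2u(x)$), this establishes both directions of the theorem.

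The one point requiring care is the absolute convergence of \eqref{cr1} under only the Lipschitz hypothesis on $f$, which is what I regard as the main obstacle. Near $y=x$, the orthogonality $\xi\perp x$ yields $\langle y,\xi\rangle=\langle y-x,\xi\rangle=O(|y-x|)$, so the integrand is $O(|y-x|^{-n})$, which is integrable in $\R^{n+1}$. Near $y=0$, the homogeneity of $f$ combined with its Lipschitz bound on $\S^n$ gives $|\nabla f(y)|\le C|y|^{-2}$; together with $|\langle y,\xi\rangle|\le|y|$ the integrand is $O(|y|^{-1})$, which is integrable precisely because $n\ge 2$. At infinity, $\langle y,\xi\rangle\,f_\xi=O(|y|^{-1})$ and $|x-y|^{-n-1}=O(|y|^{-n-1})$ give $O(|y|^{-n-2})$ decay. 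Once this integrability analysis is in hand, the theorem reduces to the algebraic identification above.
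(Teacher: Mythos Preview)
Your proof is correct and follows essentially the same route as the paper: the identities \eqref{uw} and \eqref{wij} are established in the introduction before the theorem, and you carry out precisely the contraction with $\xi_i\xi_j$ and the simplification $\langle x-y,\xi\rangle=-\langle y,\xi\rangle$ that the paper leaves implicit in the sentence preceding the statement. Your explicit absolute-convergence analysis near $y=x$, $y=0$, and $y=\infty$ is a welcome addition that the paper defers to the later estimate \eqref{oxz}; the only minor quibble is that integrability near $y=0$ already holds for $n\ge 1$, so ``precisely because $n\ge 2$'' slightly overstates the role of that hypothesis.
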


Our condition \eqref{cr1} looks much simpler than \eqref{cr},
because the Green function on the sphere is very complicated.
But we should point out that our condition is equivalent to Firey's, 
and also Berg's conditions, as they are all necessary and sufficient conditions
for the Christoffel problem, see Remark 3.2. 
In Theorem \ref{T3.1}-\ref{T3.3} below, we will give sufficient conditions for H\"older continuous $f$.
Note when $n=1$, $\M$ is a closed curve and is automatically convex when $f> 0$.

We point out that condition \eqref{cr1} is only used for the convexity of solution. 
For the existence of solutions to \eqref{e1}, 
one also needs to assume the condition \eqref{OT} as in \cite{F67}.
Combining the existence of solutions in \cite{F67} and the convexity of solutions (Theorem \ref{T1.1}), 
we have

\begin{theorem}\label{T1.2} 
Suppose $n\ge 2$ and $f$ is a positive Lipschitz continuous function.
The Christoffel problem has a solution if and only if $f$ satisfies \eqref{OT} and \eqref{cr1}.
Moreover, the solution is unique up to translation. 
\end{theorem}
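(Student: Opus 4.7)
The plan is to combine three ingredients: the Fredholm alternative for the self-adjoint operator $\Delta_{\S^n}+nI$ on $\S^n$ (handling existence of a solution to \eqref{e1}), the convexity criterion of Theorem \ref{T1.1} (upgrading this solution to a support function), and a short kernel computation (giving uniqueness modulo translation).

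For the necessity direction, suppose $\Om$ is a convex body whose support function $u$ solves \eqref{e1}. Pairing \eqref{e1} with $x_i$ on $\S^n$, integrating by parts, and using the identity $(\Delta_{\S^n}+nI)x_i = 0$ (first-order spherical harmonics have eigenvalue $-n$) immediately yields \eqref{OT}. Moreover, since $u$ extends to a convex homogeneous-of-degree-one function on $\R^{n+1}$, Theorem \ref{T1.1} supplies \eqref{cr1}.

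For the sufficiency direction, condition \eqref{OT} says that $f$ is orthogonal to the kernel $\mathrm{span}\{x_1,\dots,x_{n+1}\}$ of the self-adjoint operator $\Delta_{\S^n}+nI$. The Fredholm alternative then produces a solution $u\in C^{2,\alpha}(\S^n)$ to \eqref{e1}; this is the existence statement of \cite{F67}, and Lipschitz regularity of $f$ combined with standard elliptic regularity on $\S^n$ guarantees enough smoothness for the computation \eqref{wij} to apply. Extending $u$ by homogeneity of degree one to $\R^{n+1}$, condition \eqref{cr1} together with Theorem \ref{T1.1} gives $U = \nabla^2 u + u I \ge 0$ on $\S^n$, so $u$ is the support function of a closed convex set $\Om = \{Du(x) \mid x\in\S^n\}$. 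Because $f>0$, $U\not\equiv 0$, so $\Om$ is not a single point and is a genuine convex body solving the Christoffel problem.

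For uniqueness, if $u_1,u_2$ both satisfy \eqref{e1} with the same $f$, their difference $v=u_1-u_2$ lies in the kernel of $\Delta_{\S^n}+nI$, hence equals $\lan \a, x\ran$ for some $\a\in\R^{n+1}$. Since adding $\lan \a,x\ran$ to the support function of $\Om$ produces the support function of the translate $\Om+\a$, the two solutions differ precisely by a translation. The step requiring the most care is ensuring that the Fredholm solution has sufficient regularity for Theorem \ref{T1.1} to be applied pointwise — but Lipschitz $f$ paired with Schauder theory on $\S^n$ is more than adequate, so no serious obstacle arises and the theorem follows by assembling these pieces.
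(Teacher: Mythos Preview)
Your proposal is correct and follows essentially the same approach as the paper: the paper simply states (in the paragraph immediately preceding the theorem) that one combines the existence of solutions to \eqref{e1} under condition \eqref{OT} from \cite{F67} with the convexity criterion of Theorem~\ref{T1.1}. Your write-up supplies the standard details the paper leaves implicit---the Fredholm alternative for $\Delta_{\S^n}+nI$, the kernel computation for uniqueness, and the regularity remark---but the underlying strategy is identical.
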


This paper is divided into two parts.
In part I, we will discuss in more details on the condition \eqref{cr1}.
For example, by integration by parts, we see that the solution is convex
if $f$ is H\"older continuous and its H\"older norm is smaller than a certain constant. 
The constant is computed in \eqref{gna}.
We will also deduce other sufficient conditions on $f$ such that the solution is convex.
These conditions are  contained in Theorems  \ref{T3.1}-\ref{T3.3}.
In Section 2 we also give a proof for the existence of entire solutions to equation \eqref{e2}, 
for any locally integrable function $f$.

In part I\!I, we consider an extension of the Christoffel problem, called the $L_p$-Christoffel problem.
The associated equation is 
\beq\label{e3}
\Delta_{S^n} u +nu  = f(x) u^{p-1} \ \ \text{on} \  \S^{n}.
\eeq
This equation was first introduced in \cite{HMS04} and later studied in \cite{GX18}.
In these two papers, the authors extended the sufficient condition for the Christoffel-Minkowski problem in \cite {GM03}
to equation \eqref{e3}, for $p>1$.
In this paper we give some sufficient conditions for the convexity of solutions to \eqref{e3} in 
Theorem \ref{T4.1}.  
 \vskip20pt

\section{Existence of entire solutions to the Laplace equation}

In this section, we show that equation \eqref{e2} has a solution for any locally integrable $f$. 
This is a known result but it is hard to find a proof in literature. So we present a proof here
which should be of interest to the readers.

\begin{theorem}\label{T2.1}
Suppose $f(x)\in L^p_{loc}(\R^n)$ for some $p\ge 1$.
Then there exists a solution $u$ to the equation
\beq\label{2.1}
\Delta u=f(x)\ \ \ \text{on}\ \ \R^n.
\eeq
\end{theorem}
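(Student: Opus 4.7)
The plan is to construct $u$ as a sum of localized Newtonian potentials of $f$, corrected by harmonic polynomials to enforce convergence on all of $\R^n$. First, fix a smooth partition of unity $\{\eta_k\}_{k\ge 0}$ on $\R^n$ with $\eta_0$ supported in $B_2$ and $\eta_k$ supported in the dyadic annulus $A_k=\{2^{k-1}\le|y|\le 2^{k+1}\}$ for $k\ge 1$, and set $f_k=\eta_k f$. Each $f_k$ is compactly supported with $\|f_k\|_{L^p}\le\|f\|_{L^p(A_k)}$, so its Newtonian potential
\[
v_k(x)=\int_{\R^n} F(x-y)\, f_k(y)\, dy,
\]
with $F$ the fundamental solution of $\Delta$ on $\R^n$, is a distributional solution of $\Delta v_k=f_k$ and lies in $W^{2,p}_{\loc}(\R^n)$ by Calder\'on--Zygmund.

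A direct sum $\sum_k v_k$ does not converge, since each $v_k$ only decays like $|x|^{2-n}$ at infinity (or grows logarithmically / linearly in the low dimensions $n=2,1$). The key observation I would exploit is that outside $A_k$ the potential $v_k$ is harmonic, hence real-analytic, on the ball $B_{2^{k-1}}$, which contains the origin. Let $P_k$ denote the Taylor polynomial of $v_k$ at $0$ of order $N_k$, to be chosen. Because $v_k$ is harmonic near $0$, $P_k$ is itself a harmonic polynomial, so
\[
u_k := v_k - P_k
\]
remains a distributional solution of $\Delta u_k=f_k$ on all of $\R^n$.

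Using the pointwise kernel bound $|D^{N_k+1}_x F(x-y)|\le C_{N_k}|y|^{-(n+N_k-1)}$ valid for $|x|\le|y|/2$, together with H\"older's inequality on $A_k$, Taylor's formula gives, for $|x|\le 2^{k-2}$,
\[
|u_k(x)|\le C_{N_k}\,|x|^{N_k+1}\,2^{k(1-N_k-n/p)}\,\|f\|_{L^p(A_k)},
\]
with entirely analogous bounds for $Du_k$ and $D^2u_k$. Choosing $N_k\to\infty$ fast enough (for instance $N_k=k$) makes $\sum_k u_k$ converge absolutely and uniformly, together with its first two derivatives, on every ball $B_R$: for fixed $R$, only finitely many indices $k$ satisfy $2^{k-2}\le R$, and on the remaining tail the estimate above is geometrically (in fact super-geometrically) small in $k$. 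Setting $u=v_0+\sum_{k\ge 1}u_k$ and passing $\Delta$ through the series termwise then yields $\Delta u=\sum_k f_k=f$ in the distributional sense; standard Calder\'on--Zygmund regularity then places $u$ in $W^{2,p}_{\loc}$.

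The main technical obstacle is the dyadic bookkeeping needed to pick the truncation orders $N_k$: one must verify both that $v_k$ is truly harmonic on a neighbourhood of $0$ (so that its Taylor polynomial is legitimately harmonic) and that the Taylor remainder of $v_k$ and of its derivatives up to order two decays fast enough to be summable on every fixed ball. Once the kernel estimate above is in place the rest is routine; the argument is dimension-uniform and covers the cases $n=1,2$, where $F$ is not a decaying power of $|x-y|$, after the cosmetic replacement of the kernel bound by $|D^{N_k+1}_x F(x-y)|\le C_{N_k}|y|^{-(n+N_k-1)}$ applied with the correct fundamental solution.
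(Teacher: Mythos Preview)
Your approach is essentially the same as the paper's: both localize $f$ to annuli, form the Newtonian potential on each piece, subtract the harmonic Taylor polynomial at the origin, and sum. The paper writes $G(x,y)=F(x-y)-F(-y)=\sum_{|\alpha|\ge1}\phi_\alpha(y)x^\alpha$, truncates at order $j$, and chooses the annuli $\Omega_j=B_{R_j}\setminus B_{R_{j-1}}$ \emph{adaptively}, picking $R_j$ large enough (depending on $\sup_{B_{R_j}}|f|$) that the order--$j$ tail is uniformly small. You take the dual route: fix dyadic annuli $A_k$ and let the truncation order $N_k$ vary. The paper also reduces first to locally bounded $f$ and handles the $L^p$ remainder by quoting a reference, whereas your direct use of H\"older on each $A_k$ is a bit cleaner.

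One slip: your parenthetical ``for instance $N_k=k$'' is not enough for arbitrary $f\in L^p_{\loc}$. Your bound
\[
|u_k(x)|\le C_{N_k}\,|x|^{N_k+1}\,2^{k(1-N_k-n/p)}\,\|f\|_{L^p(A_k)}
\]
still carries the uncontrolled factor $\|f\|_{L^p(A_k)}$, and with $N_k=k$ the remaining factors decay only like $2^{-k^2}$, which a merely locally integrable $f$ can beat. You must choose $N_k$ depending on $\|f\|_{L^p(A_k)}$: since $C_{N_k}\le C_0^{N_k}$ and $(C_0|x|\,2^{-k})^{N_k}\to 0$ for $|x|$ a fixed fraction of $2^{k}$, one can take $N_k$ large enough that the whole right-hand side is $\le 2^{-k}$ on, say, $|x|\le 2^{k}/(4C_0)$. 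This is precisely the adaptivity the paper builds into its choice of $R_k$; once you make this adjustment your argument goes through.
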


 \begin{proof}
At first we consider the 2-dimensional case and
suppose $f(x)$ is a locally bounded function.
Let
$$ {\begin{split}
G(x, y) = : & \frac {1}{2\pi}\big(\log |x-y|-\log |y|\big)\\
 =\ & \frac {1}{4\pi}\log \Big(1+\frac{|x|^2-2x\cdot y}{|y|^2}\Big)\\
=&-\frac {1}{4\pi}\sum_{j=1}^\infty\frac 1j  \Big(\frac{2x\cdot y-|x|^2}{|y|^2}\Big)^j\\
=& \sum_{|\alpha|\ge 1}\phi_\alpha(y)x^\alpha,
\end{split}} $$
where  $\alpha=(\alpha_1,\ldots, \alpha_n)$ with $\alpha_i\ge0$, 
$x^\alpha =\prod_{i=1}^n x_i^{\alpha_i}$,
and $\phi_\alpha(y)$ satisfies
$$\phi_\alpha(ry)=\frac 1{r^{|\alpha|}}\phi_\alpha(y).$$
Since $\Delta_xG(x, y)=0$, we have
$$\Delta_x \Big(\sum_{|\alpha|=k}\phi_\alpha(y) x^\alpha\Big)=0\ \ \forall\ \ k\ge 1.$$
Moreover,
$$|\phi_\alpha(y)|\le M^{|\alpha|}/|y|^{|\alpha|}$$
for some $M>0$ independent of $|\alpha|$.

Let
$$\psi_k(x, y)=\sum_{|\alpha|\le k}\phi_\alpha(y) x^\alpha,\ \ k\ge 1.$$
We can select a monotone increasing sequence
$R_k\to \infty$ as $k\to \infty$ so that
$$\sum_{|\alpha|=k}^\infty |\phi_\alpha (y)||x|^{|\alpha|}
\le \frac{1}{1+|y|^{n+2}} \frac{1}{\sup_{|x|\le R_k} (1+|f(x)|)}
\ \ \forall\ 1< |y|\le R_k  \ \text{and}\ |x|\le |y|/4M.$$
This is possible since the left hand side
$$\le \sum_{|\alpha|=k}^\infty
\frac {M^{|\alpha|}|x|^{|\alpha|}}{|y|^{|\alpha|}}
\le \sum_{|\alpha|=k}^\infty \frac {1}{4^{|\alpha|}}
\le \sum_{j=k}^\infty 2^{-j}$$
provided $|y|\ge 4M|x|$.

Let $\Om_k=B_{R_k}(0)\backslash B_{R_{k-1}}(0)$. Let
$$u_k(x)=\sum_{j=1}^k\int_{{\Om_j}} f(y) \big[G(x,y)-\psi_j(x,y)\big]dy.$$
Then
$$\Delta u_k(x)=
\left\{ {\begin{split} 
 & f(x),\ \ \ \ \          &x\in B_{R_k}(0),\\
  & 0 ,  \ \ \ \ \  \ \            &|x|> R_k .
\end{split}} \right.$$
For $|x|\le R_k/4M$, we estimate $u_k(x)$ as follows.
$${\begin{split} 
|u_k(x)|
&\le \sum_{j=1}^{j_0} \Big| \int_{\Om_j} f(y)\big[G(x,y)-\psi_j(x,y)\big]dy \Big|\\
&+\sum_{j=j_0+1}^{k} \Big|\int_{\Om_j} f(y)\big[G(x,y)-\psi_j(x,y)\big]dy\Big|
\end{split}}  $$
where $j_0$ is {the smallest integer}
so that $R_{j_0}\ge 4M|x|$ ($j_0$ depends on $|x|$ and $f$,  but is independent of $k$).
The first term
$${\begin{split} 
&\le \sum_{j=1}^{j_0} \Big| \int_{\Om_j} f(y)[G(x,y)-\psi_j(x,y)]dy \Big|\\
&\le C_{f, x} \int_{B_{R_{j_0}}(0)}|G(x,y)|+\sum_{j=1}^{j_0}|\psi_{j}(x,y)|dy\\
&\le \tilde C_{f, x} .
\end{split}}  $$
The second term
$${\begin{split} 
&\le \sum_{j=j_0+1}^{k} \Big| \int_{\Om_j} f(y) \frac{1}{1+|y|^{n+2}} \frac 1{\sup_{|x|\le R_j} (1+|f(x)|)}dy \Big| \\
&\le C \int_{B_{R_k}(0)\backslash B_{R_{j_0}}(0)}\frac{1}{1+|y|^{n+2}} dy\\
&\le C .
\end{split}}  $$
Hence for any given $|x|\le R_k/4M$,
$$u_k(x)\le C$$
with $C$ depending only on $x$ and $f$, but is independent of $k$.
Passing to a subsequence we see that
$u_k(x)\to u(x)$ as $k\to\infty$ and $u$ is a solution of \eqref{2.1}.
Moreover, $u(x)$ can be represented by
$$u(x)=\sum_{j=1}^\infty \int_{\Om_j} f(y)\big[G(x,y)-\psi_j(x,y)\big]dy.$$

For general $f(x)\in L_{loc}^p(\R^n)$, we may decompose $f(x)=g(x)+h(x)$ so
that $g(x)$ is a locally bounded function and $h(x)\in L^p(\R^n)$.
From the above proof there exists a solution to $\Delta v(x)= g(x)$.
The existence of $\Delta w(x)=h(x)$ is known, see, e.g. \cite{AGG}.
Hence $v(x)+w(x)$ is a solution of \eqref{2.1}.

For higher dimensional case, let
$${\begin{split} 
G(x, y) &=\frac {-1}{(n-2)\omega_{n-1}}\big\{|x-y|^{2-n}-|y|^{2-n}\big\} \\
     &= \frac {-1}{(n-2)\omega_{n-1}|y|^{n-2}}
\Big\{ \frac {1}{\big(1+\frac {|x|^2-2x\cdot y}{|y|^2}\big)^{\frac{n-2}{2}}} -1\Big\} \\
  &= \sum_{|\alpha|\ge 1}\phi_\alpha(y)x^\alpha  
  \end{split}}  $$
where $\phi_\alpha(y)$ satisfies
$$\phi_\alpha(ry)=r^{2-n-|\alpha|}\phi_\alpha(y),$$
and
$$\Delta_x(\sum_{|\alpha|=k}\phi_\alpha(y) x^\alpha)=0\ \ \forall\ \ k\ge 1.$$
Moreover,
$$|\phi_\alpha(y)|\le M^{|\alpha|}/|y|^{|\alpha|+n-2}$$
for some $M>0$ independent of $|\alpha|$.
Suppose $f(x)\in L^\infty_{loc}(\R^n)$. Let
$$\psi_k(x, y)=\sum_{|\alpha|\le k}\phi_\alpha(y) x^\alpha,\ \ k\ge 1 ,$$
and select an increasing sequence $R_k\to \infty$ as $k\to \infty$ so that,
for $k$ large,
$$\sum_{|\alpha|=k}^\infty |\phi_\alpha (y)||x|^{|\alpha|}
\le \frac{1}{1+|y|^{n+2}} \frac{1}{\sup_{|x|\le R_k} (1+{|f(x)|})}
\ \ \forall\  {1< |y|\le R_k} \ \text{and}\ |x|\le |y|/4M .$$
Let $\Om_k=B_{R_k}(0)\backslash B_{R_{k-1}}(0)$ and let
$$u_k(x)=\sum_{j=1}^k\int_{\Om_j} f(y)[G(x,y)-\psi_j(x,y)]dy.$$
Similarly to the case $n=2$ ,we have $|u_k(x)|\le C$ for $|x|\le R_k/4M$,
where $C$  is independent of $k$. Hence by selecting a subsequence we see that
$u_k$ converges to a solution of \eqref{2.1}.
\end{proof}

We point out that equation \eqref{e2} has a solution even if $f$ is not homogeneous of degree $-1$.
But in this case, the solution is not homogeneous of degree 1.


\section{Convex solutions to Christoffel's problem}

In this section, we deduce more conditions on $f$
such that the solution to \eqref{e2} is convex.

First  by \eqref{2nd}, we have
\beqn\label{t3.1}
D_{ij}w_R(x) &=& -\int_{\p B_R(0)}f(y) F_{x_j}(x,y)\gamma_i
+f(x)\int_{B_R(0)}  F_{x_j}(x,y) (|y|^{-1} )_{y_i}dy\notag\\
&&+ \int_{B_R(0)}  F_{x_j}(x,y)\big(f(y) - |y|^{-1} f(x)\big)_{y_i} dy \notag \\
&=& -f(x)\int_{\p B_R(0)}|y|^{-1}F_{x_j}(x,y)\gamma_i
+f(x)\int_{B_R(0)}  F_{x_j}(x,y) (|y|^{-1} )_{y_i}dy\\
&&+ \int_{B_R(0)}  F_{x_ix_j}(x,y)\big(f(y) - |y|^{-1} f(x)\big)dy .\notag
\eeqn
The first integral in \eqref{t3.1}
\beqs
\int_{\p B_R(0)}|y|^{-1}F_{x_j}(x,y)\gamma_i =\frac{1}{\omega_nR} 
\int_{\p B_R(0)}\frac{x_j-y_j}{|x-y|^{1+n}}\gamma_i =O(R^{-1}).
\eeqs
Noting that $u_0(x)=: \frac1n |x|$ solves \eqref{e2} for $f = |x|^{-1}$,
we can calculate the second integral in \eqref{t3.1},
\beqs
\lim_{R\to\infty}\int_{B_R(0)} F_{x_j}(x,y) (|y|^{-1})_{y_i}dy = 
\frac1n \Big(\frac{\delta_{ij}}{|x|} - \frac{x_ix_j}{|x|^3}\Big).
\eeqs
Sending $R\to \infty$, we conclude by \eqref{uw}
\beq\label{t3.2}
u_{ij}(x) = \frac{f(x)}{n}\Big(\frac{\delta_{ij}}{|x|} - \frac{x_ix_j}{|x|^3}\Big)
+\int_{\R^{n+1}} F_{x_ix_j}(x,y)\big(f(y) - |y|^{-1} f(x)\big)dy.
\eeq
Note that the integral \eqref{t3.2} is convergent provided $f\in C^\alpha(\S^n)$ for some $\alpha>0$.
By the homogeneity,  $u$ is convex if and only if $\sum_{i,j}u_{ij}(x)\xi_i\xi_j \ge 0$
for all unit vector $\xi$ satisfying $\xi\perp x$.
Direct computation shows
\beqs
F_{x_ix_j}(x,y) = \frac{1}{\omega_n} \frac{|x-y|^2\delta_{ij} -(n+1)(x_i-y_i)(x_j-y_j)}{|x-y|^{n+3}}.
\eeqs
Therefore we obtain the following criterion for the convexity of $u$.

\begin{theorem}\label{T3.1}
Let $n\ge2$, and $f\in C^\alpha(\S^n)$ be a positive function.
Extend $f$ to $\R^{n+1}$ such that it is homogeneous of degree $-1$. 
Then the solution $u$ to \eqref{e1} is convex
if and only if \ $\forall \ x,\xi \in\S^n$ with $\xi\perp x$, 
\beq\label{cr2}
\frac{1}{\omega_n}\int_{\R^{n+1}}\frac{|x-y|^2-(n+1)\lan\xi,y\ran^2}{|x-y|^{n+3}} \Big(f(y)-f(x)/|y|\Big)dy
+ \frac1nf(x) \ge 0.
\eeq
\end{theorem}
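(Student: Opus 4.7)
The plan is to derive the criterion \eqref{cr2} by substituting the already-established formula \eqref{t3.2} into the condition $\sum_{i,j}u_{ij}(x)\xi_i\xi_j\ge0$ and simplifying. The first step is the reduction of convexity to such a tangential positivity test: since $u$ is positively $1$-homogeneous, Euler's identity $x_j u_j(x)=u(x)$ differentiates to $u_{ij}(x)x_j=0$, so the radial direction always lies in the kernel of $D^2u(x)$. Hence $u$ is convex on $\R^{n+1}$ if and only if, for every $x\in\S^n$ and every unit vector $\xi\perp x$, one has $\sum_{i,j}u_{ij}(x)\xi_i\xi_j\ge0$. This is the criterion we will match to \eqref{cr2}.

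Next I would contract \eqref{t3.2} with $\xi_i\xi_j$ at a point $x\in\S^n$. For the boundary-type term, using $|x|=1$, $|\xi|=1$, and $\lan x,\xi\ran=0$,
\[
\frac{f(x)}{n}\Big(\frac{\delta_{ij}}{|x|}-\frac{x_ix_j}{|x|^3}\Big)\xi_i\xi_j=\frac{f(x)}{n}\bigl(|\xi|^2-\lan x,\xi\ran^2\bigr)=\frac{f(x)}{n}.
\]
For the integral term I would use the explicit second-derivative formula for $F$ quoted in the text to obtain
\[
\sum_{i,j}F_{x_ix_j}(x,y)\xi_i\xi_j=\frac{1}{\omega_n}\cdot\frac{|x-y|^2-(n+1)\lan\xi,x-y\ran^2}{|x-y|^{n+3}},
\]
and then substitute $\lan\xi,x-y\ran=-\lan\xi,y\ran$, valid because $\xi\perp x$. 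Combining these two contractions reproduces exactly the left-hand side of \eqref{cr2}, which establishes the equivalence.

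The main obstacle in making this rigorous is the convergence of the integral in \eqref{cr2}. The kernel behaves like $|x-y|^{-(n+1)}$ near the singular point $y=x$, which is not locally integrable in $\R^{n+1}$; the convergence comes from the vanishing of $f(y)-f(x)/|y|$ at $y=x$. Using that $|y|^{-1}$ is smooth away from the origin and that $|y|f(y)$ is $0$-homogeneous and Hölder continuous of order $\alpha$ on $\S^n$ (hence on compact subsets of $\R^{n+1}\setminus\{0\}$), I would estimate
\[
\bigl|f(y)-|y|^{-1}f(x)\bigr|=\bigl||y|f(y)-f(x)\bigr|\cdot|y|^{-1}\le C|y-x|^{\alpha}
\]
on a neighborhood of $x$, which reduces the singularity to an integrable $|y-x|^{\alpha-n-1}$. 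At infinity the kernel decays like $|y|^{-(n+1)}$ while $|f(y)|+f(x)/|y|=O(|y|^{-1})$, giving an integrable $O(|y|^{-(n+2)})$ tail. With absolute integrability in hand, the contraction computation is fully justified and the theorem follows.
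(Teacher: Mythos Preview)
Your proposal is correct and follows essentially the same route as the paper: the paper derives \eqref{t3.2} in the paragraphs preceding the theorem, computes $F_{x_ix_j}$ explicitly, and then obtains \eqref{cr2} by contracting with $\xi_i\xi_j$ for $\xi\perp x$, exactly as you do. You actually supply more detail than the paper on two points it only asserts---the reduction of convexity to tangential positivity via Euler's identity, and the convergence of the integral using the $C^\alpha$ vanishing of $f(y)-|y|^{-1}f(x)$ at $y=x$ together with the $O(|y|^{-(n+2)})$ decay at infinity.
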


From \eqref{cr2}, we can give a quantitative condition on $f$  
such that \eqref{e1} admits a convex solution.
For $\alpha>0$, denote 
\beq\label{gna}
\gamma_{n,\alpha}=\frac{\omega_n}{n(n+1)}\Big[\int_{\R^{n+1}}|y-\e_{n+1}|^{-n-1}|y|^{-1}
\dist_{\S^n}^\alpha(y/|y|,\e_{n+1})dy\Big]^{-1},
\eeq
where $\dist_{\S^n}(x,z)$ is the spherical distance between two points $x,z\in\S^n$,
and $\e_{n+1}=(0,\ldots,0,1)$.
Note that $\gamma_{n,\alpha}$ is unchanged if $\e_{n+1}$ is replaced by any other $x\in\S^n$.

\begin{theorem}\label{T3.2}
Let $n\ge 2$, and $f\in C^\alpha(\S^n)$ be a positive function, $\alpha\in (0,1)$.
Then the solution to \eqref{e1} is convex provided
\beq\label{fca}
|f|_{C^\alpha(\S^n)} \le \gamma_{n,\alpha} \min_{\S^n}f . 
\eeq
\end{theorem}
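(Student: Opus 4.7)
My approach is to verify directly the convexity criterion \eqref{cr2} of Theorem~\ref{T3.1}: bound the integral term there in absolute value by $f(x)/n$ whenever the hypothesis \eqref{fca} holds, so the full expression in \eqref{cr2} is non-negative and Theorem~\ref{T3.1} yields convexity.

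First, use the homogeneity of the extended $f$: since $f$ is homogeneous of degree $-1$,
\[ f(y)-f(x)/|y| \;=\; [f(y/|y|)-f(x)]/|y|, \]
and H\"older continuity on $\S^n$ gives $|f(y)-f(x)/|y||\le |f|_{C^\alpha(\S^n)}\,|y|^{-1}\dist_{\S^n}^\alpha(y/|y|,x)$. Crucially, this difference vanishes on the entire ray through $x$, which will tame the kernel singularity at $y=x$. Next, bound the kernel pointwise: since $\xi\perp x$ one has $\lan\xi,y\ran=\lan\xi,y-x\ran$, so writing $t=\lan\xi,y-x\ran^2/|x-y|^2\in[0,1]$,
\[ \left|\frac{|x-y|^2-(n+1)\lan\xi,y\ran^2}{|x-y|^{n+3}}\right| \;=\; \frac{|1-(n+1)t|}{|x-y|^{n+1}} \;\le\; \frac{n+1}{|x-y|^{n+1}}. \]
Combining the two estimates, the absolute value of the integral appearing in \eqref{cr2} is at most $(n+1)\omega_n^{-1}|f|_{C^\alpha}\,I(x)$, where
\[ I(x)\;=\;\int_{\R^{n+1}}|x-y|^{-n-1}\,|y|^{-1}\,\dist_{\S^n}^\alpha(y/|y|,x)\,dy. \]
A short check confirms $I(x)<\infty$: near $y=x$ the H\"older factor absorbs the singularity (here $\alpha>0$ is essential); near $y=0$ the factor $|y|^{-1}$ is integrable in $\R^{n+1}$ with $n+1\ge 3$; at infinity the integrand decays like $|y|^{-n-2}$.

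A change of variable $y\mapsto Ry$ with $R\in SO(n+1)$ mapping $\e_{n+1}$ to $x$ shows that $I(x)$ is independent of $x\in\S^n$, so by \eqref{gna} it equals $\omega_n/[n(n+1)\gamma_{n,\alpha}]$. Substituting this and applying \eqref{fca} together with $f(x)\ge\min_{\S^n}f$ yields
\[ \left|\frac{1}{\omega_n}\!\int_{\R^{n+1}}\!\!\frac{|x-y|^2-(n+1)\lan\xi,y\ran^2}{|x-y|^{n+3}}\bigl(f(y)-f(x)/|y|\bigr)dy\right| \;\le\; \frac{n+1}{\omega_n}\,\gamma_{n,\alpha}f(x)\cdot\frac{\omega_n}{n(n+1)\gamma_{n,\alpha}} \;=\; \frac{f(x)}{n}, \]
which is exactly what \eqref{cr2} requires.

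The main technical point is obtaining the kernel estimate with the correct constant $n+1$: the naive splitting $|1-(n+1)t|\le 1+(n+1)t\le n+2$ is too wasteful, so one should observe that the affine function $1-(n+1)t$ on $[0,1]$ takes values in $[-n,1]$, whence $|1-(n+1)t|\le n\le n+1$. The remaining ingredients---integrability and rotational invariance of $I(x)$, and the final substitution---are straightforward.
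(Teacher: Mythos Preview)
Your proof is correct and follows essentially the same approach as the paper: both bound the kernel via $\lan\xi,y\ran^2=\lan\xi,y-x\ran^2\le|x-y|^2$ to get $|F_{\xi\xi}(x,y)|\le (n+1)\omega_n^{-1}|x-y|^{-n-1}$, control $|f(y)-f(x)/|y||$ by the H\"older seminorm times $|y|^{-1}\dist_{\S^n}^\alpha(y/|y|,x)$, and then identify the resulting integral with $\omega_n/[n(n+1)\gamma_{n,\alpha}]$ from the definition~\eqref{gna}. The paper's write-up is terser (and records a strict inequality, since equality in $\lan\xi,y\ran^2\le|x-y|^2$ occurs only on a null set), but the substance is the same.
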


\begin{proof}
For $x,\xi\in\S^n$ with $\xi\perp x$ and $y\in\R^{n+1}$,
we have 
$$\lan \xi,y \ran^2 = \lan \xi,y-x\ran^2\le|x-y|^2 , $$
with equality if and only if $y-x = t\xi$ for some $t\in\R$.
Therefore
\beqs
\text{LHS of \eqref{cr2}}&>&
-\frac{(n+1)|f|_{C^\alpha(\S^n)}}{\omega_n} \int_{\R^{n+1}}|x-y|^{-n-1} |y|^{-1} \dist_{\S^n}^\alpha(y/|y|,x)dy
+\frac1n\min_{\S^n}f\\
&\ge&0,
\eeqs
provided \eqref{fca} holds.
\end{proof}

By rewriting \eqref{cr1} and \eqref{cr2} in their equivalent form, we have the following theorem.

\begin{theorem}\label{T3.1a}
Let $n\ge 2$,  $f>0$ be a function on $\S^n$.
Extend $f$ to $\R^{n+1}$ such that it is homogeneous of degree $-1$. 
\begin{itemize}
\item[(i)] If $f\in C^{0,1}(\S^n)$, then the solution $u$ to \eqref{e1} is convex if and only if 
\beq\label{cr3a}
\int_{\S^n}\omega(x,z)\lan\xi,z\ran\lan Df(z),\xi \ran dz\ge 0 \ \ \forall \ x, \xi\in\S^n  \ \text{satisfying} \ x\perp \xi,
\eeq
where
\beqs
\omega(x,z) = - \int_0^\infty \frac{r^{n-1}}{|x-rz|^{n+1}}dr .
\eeqs

\item[(ii)] If $f\in C^{\alpha}(\S^n)$ for some $\alpha\in (0, 1)$,
then the solution $u$  is convex if and only if 
\beq\label{cr3b}
\int_{\S^n} \hat \omega(x,z,\xi) g(x,z)d\s(z) + \frac1n f(x) \ge 0
\ \ \forall \ x, \xi\in\S^n  \ \text{satisfying} \ x\perp \xi,
\eeq
where
\beqs
\hat \omega(x,\xi,z)&=& \frac{1}{\omega_n}\int_0^\infty
\frac{|x-rz|^2-(n+1)\lan\xi,rz\ran^2}{ |x-rz|^{n+3}}r^{n-1}dr, \\
g(x,z)&=&f(z)-f(x).
\eeqs

\end{itemize}
\end{theorem}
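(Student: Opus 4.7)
The plan is to obtain both parts by rewriting the bulk integrals in Theorems \ref{T1.1} and \ref{T3.1} in polar coordinates $y = rz$, $r>0$, $z\in\S^n$, with $dy = r^n\, dr\, d\sigma(z)$. Since $f$ is extended as a function homogeneous of degree $-1$, we have $f(rz) = r^{-1}f(z)$, $Df(rz) = r^{-2}Df(z)$, and $f_\xi(rz) = r^{-2}\lan\xi, Df(z)\ran$, so every factor of $r$ in the integrand separates cleanly from the spherical variable. Performing the $r$-integration first should produce the kernels $\omega$ and $\hat\omega$ from the statement.

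Concretely, for part (i) the integrand of \eqref{cr1} becomes
\[
-\frac{r^{n-1}}{|x-rz|^{n+1}}\lan\xi,z\ran\lan Df(z),\xi\ran\, dr\, d\sigma(z),
\]
whose inner $r$-integral is exactly $\omega(x,z)$, yielding \eqref{cr3a}. For part (ii), the homogeneity of $f$ gives $f(y) - f(x)/|y| = r^{-1}g(x,z)$, while $|x-y|^2 - (n+1)\lan\xi,y\ran^2 = |x-rz|^2 - (n+1)\lan\xi,rz\ran^2$ is already in the right form; the factor $r^{-1}$ cancels against $r^n$ from $dy$, leaving $r^{n-1}$ in the numerator, and the inner $r$-integral is precisely $\hat\omega(x,\xi,z)$. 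Adding back the $\tfrac{1}{n}f(x)$ term then reproduces \eqref{cr3b}.

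The main technical obstacle is justifying the switch to iterated integrals near the singular locus $(r,z) = (1,x)$. Using the approximation $|x-rz|^2 \approx (r-1)^2 + \dist_{\S^n}(x,z)^2$ for $r$ near $1$ and $z$ near $x$, a direct estimate shows $\omega(x,z) = O(\dist_{\S^n}(x,z)^{-n})$; similarly, exploiting that both $|x-rz|^2$ and $r^2\lan\xi,z\ran^2$ vanish to second order at $(1,x)$, one obtains $\hat\omega(x,\xi,z) = O(\dist_{\S^n}(x,z)^{-n})$. Neither kernel is integrable on $\S^n$ on its own, but the factor $\lan\xi,z\ran = O(\dist_{\S^n}(x,z))$ coming from $\xi\perp x$, combined with the Lipschitz bound on $Df$, restores integrability in part (i); and the H\"older estimate $|g(x,z)| \lesssim \dist_{\S^n}(x,z)^\alpha$ does the same in part (ii). Once absolute convergence of the iterated integrand is established, Fubini's theorem legitimizes the polar rewriting, and the equivalence with the criteria from Theorems \ref{T1.1} and \ref{T3.1} follows immediately.
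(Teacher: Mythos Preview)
Your proposal is correct and follows essentially the same route as the paper's own proof: both arguments rewrite the integrals from Theorems~\ref{T1.1} and~\ref{T3.1} in polar coordinates $y=rz$, identify the inner $r$-integrals as $\omega(x,z)$ and $\hat\omega(x,\xi,z)$, establish the $O(\dist_{\S^n}(x,z)^{-n})$ singularity for both kernels via the approximation $|x-rz|^2\approx(1-r)^2+\dist_{\S^n}(x,z)^2$, and then use $\lan\xi,z\ran=O(\dist_{\S^n}(x,z))$ (part (i)) or $g(x,z)=O(\dist_{\S^n}(x,z)^\alpha)$ (part (ii)) to obtain integrability. The only cosmetic difference is that the paper verifies the kernel estimates by explicit computation at test points $z_{1,\ve},z_{2,\ve}$ rather than stating the general heuristic, and does not name Fubini explicitly.
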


\begin{proof}

We first show that \eqref{cr3a} is necessary and sufficient for the convexity.
Direct calculation shows that, by the homogeneity of $f$,
\beqn\label{T3.1a.1}
\text{LHS of \eqref{cr1}} &=&- \int_0^\infty\int_{\S^n} \frac{r^{n-1}}{|x-rz|^{n+1}}
 \lan \xi,z\ran \lan Df(z),\xi\ran drdz \notag \\
&=&  \int_{\S^n}\omega(x,z) \lan \xi,z\ran \lan Df(z),\xi\ran dz.
\eeqn
Note that 
$$\lan \xi,z\ran = \lan \xi,z-x\ran = O(|x-z|) . $$
We will verify that
\beq\label{oxz}
 \omega(x,z) = O(|x-z|^{-n} )\ \ \ \text{for  $z$ near $x$.}
\eeq
Hence the integral \eqref{T3.1a.1} is convergent,
and by Theorem \ref{T1.1} we finish the proof of part (i).
To verify \eqref{oxz}.
we assume by a rotation of coordinates that
$x=\e_{n+1}$ and $z = (\ve\e_1+\e_{n+1})/\sqrt{1+\ve^2}$,
where $\e_k$ is the unit vector on the $x_k$-axis.
Then
\beqs
\omega(x,z)&\simeq&-\int_{0}^2 \frac{r^{n-1}}{\big((1-r)^2+\ve^2r^2\big)^{\frac{n+1}{2}}}dr
\simeq -\int_{-1}^1 \frac{(1+t)^{n-1}}{\big(t^2+\ve^2\big)^{\frac{n+1}{2}}}dt\\
&\simeq& -\ve^{-n}\int_{-1/\ve}^{1/\ve} \frac{1}{\big(\rho^2+1\big)^{\frac{n+1}{2}}}d\rho \\
&=& O(\ve^{-n}).
\eeqs
Hence \eqref{oxz} follows.

We next prove part (ii). By computation,
\beqn\label{T3.1a.3}
\text{LHS of \eqref{cr2}} &=& \frac{1}{\omega_n} \int_{\S^n} \int_0^\infty
\frac{|x-rz|^2-(n+1)\lan\xi,rz\ran^2}{|x-rz|^{n+3}} r^{n-1} \big(f(z)-f(x)\big)drdz
+\frac1n f(x) \notag\\
&=&  \int_{\S^n}\hat\omega(x,\xi,z)g(x,z)dz +\frac1n f(x) , 
\eeqn
where $g(x,z) = O(|x-z|^\alpha)$ as $f\in C^\alpha(\S^n)$.
We will verify that
\beq\label{oxxz}
\hat\omega(x,\xi,z) = O(|x-z|^{-n}) \ \ \text{for}  \ z \ \text{near} \ x.
\eeq
Assume \eqref{oxxz} for a moment. 
Then the integral in \eqref{T3.1a.3} is convergent
and by Theorem \ref{T3.1}, one sees that \eqref{cr3b} is 
a necessary and sufficient condition for the convexity.
To verify \eqref{oxxz}, we assume that $x=\e_{n+1}$ and $\xi =\e_1$.
Fix a small $\ve>0$ and consider $z_{1,\ve} = (\ve\e_1+\e_{n+1})/\sqrt{1+\ve^2}$.
Then
\beqn\label{T3.1a.5}
\hat\omega(x,\xi,z_{1,\ve})&\simeq&
\int_0^\infty \frac{r^{n-1}}{((1-r)^2+\ve^2)^{\frac{n+1}{2}}} dr
-\ve^2\int_0^\infty \frac{r^{n+1}}{((1-r)^2+\ve^2)^{\frac{n+3}{2}}} dr \notag \\
&\simeq&\int_{-1}^1 \frac{(t+1)^{n-1}}{(t^2+\ve^2)^{\frac{n+1}{2}}} dt
-\ve^2\int_{-1}^1 \frac{(t+1)^{n+1}}{(t^2+\ve^2)^{\frac{n+3}{2}}} dt \notag \\
&\simeq&\ve^{-n}\int_{-1/\ve}^{1/\ve} \frac{1}{(\rho^2+1)^{\frac{n+1}{2}}} d\rho
-\ve^{-n}\int_{-1/\ve}^{1/\ve} \frac{1}{(\rho^2+1)^{\frac{n+3}{2}}} d\rho \notag \\
&\simeq& \ve^{-n}.
\eeqn
If we consider $z_{2,\ve}=(\ve \e_2+\e_{n+1})/\sqrt{1+\ve^2}$,
then similarly, 
\beqn\label{T3.1a.6}
\omega(x,z_{2,\ve},\xi)
&\simeq&
\int_0^\infty\frac{r^{n-1}}{((1-r)^2+\ve^2)^{\frac{n+1}{2}}}dr\simeq\ve^{-n}.
\eeqn
Apparently \eqref{oxxz} follows from \eqref{T3.1a.5} and \eqref{T3.1a.6}.
\end{proof}

Next we give another sufficient condition
for the convexity of solutions to Christoffel's problem.

\begin{theorem}\label{T3.3}
Let $n\ge2$, $f\in C^{0,1}(\S^n)$ be a positive function.
Extend $f$ to $\R^{n+1}$ such that it is homogeneous of degree $-1$. 
Then the solution to \eqref{e1} is convex, provided 
\beq\label{pxf}
\p_\xi  f(x+t\xi)- \p_\xi  f(x-t\xi)\le 0, \ \ \forall \ t>0, \
\xi\in\S^n,x\in\R^{n+1} \ \text{with} \ \lan\xi,x\ran=0.
\eeq
\end{theorem}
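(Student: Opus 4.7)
The plan is to verify the criterion \eqref{cr1} of Theorem \ref{T1.1}. Fix $x,\xi\in\S^n$ with $\xi\perp x$; I need to show that
\[
I := \int_{\R^{n+1}} \frac{-\lan\xi,y\ran f_\xi(y)}{|x-y|^{n+1}}\,dy \ge 0.
\]
The main idea is to foliate $\R^{n+1}$ by lines parallel to $\xi$ and exploit \eqref{pxf} as a monotonicity condition on each fiber.

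Concretely, I would write $y=z+t\xi$ with $z\in\xi^\perp$ and $t\in\R$. Because $\xi\perp x$, we have $x\in\xi^\perp$, so the Pythagorean decomposition gives $|x-y|^2=|x-z|^2+t^2$; in addition $\lan\xi,y\ran=t$ and $f_\xi(y)=\p_\xi f(z+t\xi)$. By Fubini,
\[
I = \int_{\xi^\perp}\int_\R \frac{-t\,\p_\xi f(z+t\xi)}{(|x-z|^2+t^2)^{(n+1)/2}}\,dt\,dz.
\]
Splitting the inner integral at $t=0$ and applying the reflection $t\mapsto -t$ on $(-\infty,0)$ collapses the two halves into
\[
I = \int_{\xi^\perp}\int_0^\infty \frac{t\,\bigl[\p_\xi f(z-t\xi)-\p_\xi f(z+t\xi)\bigr]}{(|x-z|^2+t^2)^{(n+1)/2}}\,dt\,dz.
\]
Since $z\in\xi^\perp$, hypothesis \eqref{pxf} applied with $x$ replaced by $z$ gives $\p_\xi f(z+t\xi)-\p_\xi f(z-t\xi)\le 0$ for every $t>0$. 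Thus the bracket is nonnegative, the kernel is positive, and $t>0$, so $I\ge 0$. Theorem \ref{T1.1} then yields the convexity of $u$.

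The only step requiring care is justifying Fubini. This follows from the bound $|Df(y)|\le C|y|^{-2}$ (from the Lipschitz norm of $f$ on $\S^n$ and the degree $-1$ homogeneity), which gives $|\lan\xi,y\ran f_\xi(y)|\le C|y|^{-1}$, together with the cancellation $\lan\xi,y\ran=\lan\xi,y-x\ran=O(|y-x|)$ near $y=x$; these make the integrand in \eqref{cr1} absolutely integrable on $\R^{n+1}$ when $n\ge 2$. The only mild obstacle in the argument is keeping track of the sign after the reflection $t\mapsto -t$, so that the bracket lines up with the direction of the inequality in \eqref{pxf}; everything else is a routine symmetrization.
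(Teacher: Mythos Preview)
Your proof is correct and follows essentially the same symmetrization idea as the paper's: both reduce the criterion \eqref{cr1} to a nonnegative integrand by reflecting $t\mapsto -t$ along the $\xi$-direction and invoking \eqref{pxf}. The paper's write-up is slightly less direct (it rotates to $\xi=\e_1$, $x=\e_{n+1}$, introduces the reflection $\hat f(y_1,y')=f(-y_1,y')$, and uses $u_{11}=\hat u_{11}$ before averaging), and it also proves the inequality is strict via the positivity and $(-1)$-homogeneity of $f$; your nonstrict version already suffices for Theorem~\ref{T1.1} and hence for the stated conclusion.
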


\begin{proof}
By Theorem \ref{T1.1}, it suffices to show that
\beq\label{t3.3.1}
J(z,\xi)<0, \ \ \forall \ z,\xi\in\S^n \ \text{with}  \ \xi\perp z,
\eeq
where
\beqs
J(z,\xi) = \int_{\R^{n+1}}\frac{\lan\xi,y\ran f_\xi(y)}{|z-y|^{n+1}}dy.
\eeqs
For any given such $z$ and $\xi$, we assume by a rotation of coordinates that $\xi=\e_1$ and $z=\e_{n+1}$.
Let $u$ be the solution to \eqref{e2}.
Let $\hat u(y) = u(-y_1,y')$ and $\hat  f(y) = f(-y_1,y')$, 
where $y'=(y_2,\cdots,y_{n+1})$.
Then $\hat u$ is the solution corresponding to $\hat f$,
and  $u_{11}(\e_{n+1})=\hat u_{11}(\e_{n+1})$. 
Hence  
$$u_{11}(\e_{n+1}) = \frac 12 \big[ u+\hat u\big]_{11}(\e_{n+1}) . $$
Consequently,
\beq\label{t3.3.2}
J(z,\xi) = \frac12\int_{\R^{n+1}}
\frac{y_1\big(f_{y_1}(y)+\hat  f_{y_1}(y)\big)}{|\e_{n+1}-y|^{n+1}}dy.
\eeq

From \eqref{pxf}  it follows that
\beqs
f_{y_1}(y)+ \hat  f_{y_1}(y) = f_{y_1}(y_1,y')- f_{y_1}(-y_1,y') \le 0, \ \ \text{for} \ y_1>0.
\eeqs
Replacing $y_1$ by $-y_1$, we infer
\beqs
 f_{y_1}(y)+ \hat  f_{y_1}(y) =-\Big( f_{y_1}(-y_1,y')-  f_{y_1}(y_1,y')\Big)
 \ge 0, \ \ \text{for} \ y_1<0.
\eeqs
Combining the above two inequalities, we get
\beq\label{t3.3.3}
y_1\big( f_{y_1}(y)+ \hat  f_{y_1}(y) \big)\le0, \ \ \forall \ y\ne 0.
\eeq
This together with \eqref{t3.3.2} shows that $J(\xi,z)\le0$.

Suppose $J(z,\xi)=0$.
Then \eqref{t3.3.2} and \eqref{t3.3.3} implies
\beqs
\p_{y_1}\big( f(y_1,y')+f(-y_1,y')\big) = f_{y_1}(y)+\hat  f_{y_1}(y)=0
\ \ \text{for almost all} \ y\in\R^{n+1}.
\eeqs
As a consequence, there is a function $h$ defined in $\R^n$ such that, for almost all $y'\in\R^n$,
\beqs
 f(y_1,y')+ f(-y_1,y') = h(y').
\eeqs
Since $ f$ is homogeneous of degree negative one,
we conclude that, by sending $y_1\to\infty$, $h(y')=0$.
Therefore, for almost all $y'\in\R^n$,
\beqs
f(y_1,y')+ f(-y_1,y') =0.
\eeqs
This contradicts our assumption $f>0$.
We complete the proof.
\end{proof}


We finish this section by a few remarks. 

 \vskip10pt

\noindent {\bf Remark 3.1.} 
Here we state the conditions of Firey \cite{F67} and Berg \cite{B69},  in comparison with our condition \eqref{cr1}.

In \cite{F67}, 
Firey reduced condition \eqref{cr} to 
\beq\label{Fir}
\int_{S^n} \lan x, y'\ran \Theta(x, y) \lan Df(x), y'\ran dx\le 0\  \ \ \forall\ y, y'\in \S^n, y\perp y' ,
\eeq
where 
$$\Theta(x, y)= (1-\lan x, y\ran^2)^{-n/2} \int_\pi^{\text{arccos} (\lan x, y\ran)} \sin^{n-1} t dt. $$

Berg  proved that  the solution is convex if and only if the function
\beq\label{Ber}
\int_{S^n} g_n(\lan x, y\ran) f(x) d x
\eeq
is convex, where 
$$ g_2(t)=\frac 1\pi (\pi-\cos^{-1} t) (1-t^2)^{1/2} -\frac{t}{2\pi}. $$
$$  g_3(t)=1+t\log(1-t)+\big(\frac 43-\log 2) t, $$
and for $n\ge 2$,
$$ 
g_{n+2}(t) =\frac{n+1}{(n-1)^2} t g'_n(t) +\frac{n+1}{n-1} g_n(t) 
  +\frac {t}{\sqrt \pi} \frac{(n+1)\Gamma((n+2)/2)}{(n+2)\Gamma((n+1)/2)} . $$

Our condition \eqref{cr1} is equivalent to \eqref{Fir} or \eqref{Ber},
as all these conditions are derived from the fundamental solution.
The conditions \eqref{Fir} and \eqref{Ber} are derived from the
fundamental solution in $\S^n$ but ours is from that on $\R^n$.
So our condition \eqref{cr1} looks simpler.
Moreover, from our condition \eqref{cr1}, we can derive a number of simpler sufficient conditions
for the convexity of solutions.
It is not so easy to find sufficient conditions from \eqref{Fir} and \eqref{Ber}.

 \vskip10pt

\noindent {\bf Remark 3.2.}  
Firey's condition \eqref{Fir} is equivalent to our condition \eqref{cr1}.
Here we show how  \eqref{Fir} can be derived from \eqref{cr1}.
Let $s=\lan x,z \ran$ and $\theta=\arccos s$. It is not hard to see that
\beqs
\omega(x,z) &=& -\int_0^\infty\frac{r^{n-1}}{(r^2-2sr+1)^{\frac{n+1}{2}}} dr,
\eeqs
where $\omega(x,z)$ is given in Theorem \ref{T3.1a}.
Denote $\rho = (r-s)/\sqrt{1-s^2}$. Then we have
\beqs
\omega(x,z) &=&-(1-s^2)^{-\frac{n}{2}}\int_{-\frac{s}{\sqrt{1-s^2}}}^\infty\frac{(\sqrt{1-s^2}\rho+s)^{n-1}}
{(\rho^2+1)^{\frac{n+1}{2}}} d\rho.
\eeqs
Taking $\rho = -\cot \varphi$ and using $s=\cos\theta$, we further deduce
\beqs
\omega(x,z) &=&-(1-s^2)^{-\frac{n}{2}}\int_{\theta}^\pi\sin^{n-1}(\varphi-\theta)d\varphi \\
&=&\big(1-\lan x,z\ran^2\big)^{-\frac{n}{2}} \int_{\pi}^{\arccos \lan x,z\ran}\sin^{n-1}tdt.
\eeqs
Hence our condition \eqref{cr3a} is equivalent to Firey's condition \eqref{Fir}.

 \vskip10pt

\noindent {\bf Remark 3.3.}  
Pogorelov  \cite{Pog53} established the convexity of solutions to \eqref{e1} when $n=2$ under the condition
\beq\label{PC}
f (x)-  f_{ss}(x)>0 \ \ \text{on} \ \S^2,
\eeq
where the sub-script $s$ means differentiation with respect to arc length of great circle on $\S^2$.
In \cite{GM03}, Guan and Ma studied the Christoffel-Minkowski problem,
which is to find convex bodies with prescribed area measure of order $k$.
When $k=1$, it is the Christoffel problem discussed in this paper.
By proving a constant rank theorem they found the following
sufficient condition for the convexity of solutions to \eqref{e1}, 
\beq\label{GMc}
\D^2 f^{-1}(x) + f^{-1}(x) I \ge 0 \ \ \text{on} \ \S^n.
\eeq
It is easy to see that if $f$ satisfies \eqref{PC}, then it satisfies \eqref{GMc}, for two dimensions.

Our conditions in Theorem \ref{T1.1} and in Theorems \ref{T3.1}-\ref{T3.3}
do not involve the second derivatives of $f$, and so are different from \eqref{GMc}.
 
\vskip20pt


\section{The $L_p$ Christoffel problem}

In this section we study an extension of the Christoffel problem,
called the $L_p$ Christoffel problem.
The problem was introduced by Hu, Ma and Shen  \cite{HMS04},
and studied later by Guan and Xia \cite{GX18}.
It can be formulated as finding convex solutions to the equation \eqref{e3}.

In \cite{HMS04, GX18}, a constant rank theorem was established for convex solutions to \eqref{e3},
provided that
$f\in C^{1,1}(\S^n)$, $f>0$,  and
\beq\label{GM cond2}
 \D^2 f^{-\frac{1}{p}} + f^{-\frac{1}{p}} I \ge 0 \ \ \text{on} \ \S^n.
\eeq
As a result, the existence of convex solutions to \eqref{e3} was obtained 
in \cite{HMS04} for the case $p\ge2$; 
and an even convex solution to \eqref{e3} was obtained in \cite{GX18} 
if $f$ is also even and $1<p<2$. The papers \cite{HMS04} and \cite{GX18} also  obtained 
similar results for the $L_p$ version of the Christoffel-Minkowski problem.

Equation \eqref{e3} is a semi-linear elliptic equation.
Semi-linear elliptic equations have been extensively studied in the last four decades.
An example is the prescribed scalar curvature eqjuation on $\S^n$, 
\beq\label{e3a}
-\Delta_{\S^n} u + \frac{n(n-2)}{4} u =\frac{n-2}{4(n-1)} u^{\frac{n+2}{n-2}}f \ \ \text{on} \ \S^n, \ n>2.
\eeq
Equation \eqref{e3a} has been studied by numerous authors in the last a few decades,
see e.g. \cite{BC91,CY87,CLi97,CLn01,ES86,KW75}.
But the $L_p$-Christoffel problem, though the equation is so simple, 
received much less attention.
In this paper we will prove the following result.
 
\begin{theorem}\label{T4.1}
Let $n\ge 2$, and $f\in C^{0,1}(\S^n)$ be a positive function.
Given $p\ge2$, if
\beq\label{T4.1 cond}
 \Big(1+\frac{n(p-1)}{n-1}\frac{ \max_{\S^n}f}{\min_{\S^n}f} \Big)
 \Big[ \exp\Big(\frac{n\pi}{n-1}\frac{ \max_{\S^n}|\D f|}{\min_{\S^n}f}\Big)\Big]^{p-1}\max_{\S^n}|\D f| 
 \le \gamma_{n,1}\min_{\S^n}f,
\eeq
then 
\begin{itemize}

\item[(i)] when $p>2$, there is a unique positive convex solution $u$ to \eqref{e3}.
\item[(ii)] when $p=2$, there is a unique $\lambda>0$ such that
\eqref{e3} with $f$ replaced by $\lambda f$
has a unique (up to dilations) positive convex solution $u$.
\end{itemize}
\end{theorem}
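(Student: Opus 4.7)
The overall plan is to combine an a priori \(C^{0,1}\) estimate on any positive solution \(u\) of \eqref{e3} with the sufficient convexity condition of Theorem~\ref{T3.2} applied to the right-hand side \(g:=fu^{p-1}\). Once quantitative bounds on \(\max u/\min u\) and on \(|\D\log u|_\infty\) are in place, existence will be established by a continuation argument when \(p>2\) and by a linear eigenvalue analysis when \(p=2\); uniqueness will come from the strict monotonicity of \(t\mapsto t^{p-1}\).

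For the a priori estimates, I would first apply the maximum principle directly to \eqref{e3}: at interior extrema of \(u\), \(\Delta u\) has a definite sign, yielding \((n/\max f)^{1/(p-2)}\le u\le (n/\min f)^{1/(p-2)}\) when \(p>2\). A sharper control of the ratio \(\max u/\min u\) is then obtained by studying \(h:=\log u\), which satisfies \(\Delta h+|\D h|^{2}=fu^{p-2}-n\). Applying Bochner's identity on \(\S^{n}\) to \(\phi:=|\D h|^{2}\), using the Ricci lower bound \((n-1)\phi\) and retaining the non-negative term \((p-2)fu^{p-2}\phi\) (this is where \(p\ge 2\) enters crucially), a maximum-principle argument for \(\phi\) yields the sharp bound
\[
\max_{\S^{n}}|\D\log u|\le\frac{n}{n-1}\,\frac{\max|\D f|}{\min f}.
\]
Integrating this along a minimising geodesic of length at most \(\pi\) between \(x_{\min}\) and \(x_{\max}\) produces
\[
\max u/\min u\le \exp\!\Bigl(\tfrac{n\pi}{n-1}\,\tfrac{\max|\D f|}{\min f}\Bigr),
\]
which is exactly the exponential factor appearing in \eqref{T4.1 cond}.

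With these estimates in hand, verifying Theorem~\ref{T3.2} for \(g:=fu^{p-1}\) is a direct calculation: the product rule gives
\[
|\D g|_\infty\le (\max u)^{p-1}|\D f|_\infty+(p-1)(\max f)(\max u)^{p-1}|\D\log u|_\infty,\qquad \min g\ge (\min f)(\min u)^{p-1}.
\]
Dividing the inequality \(|g|_{C^{0,1}(\S^{n})}\le\gamma_{n,1}\min g\) required by Theorem~\ref{T3.2} (with \(\alpha=1\)) by \((\min u)^{p-1}\) and substituting the two a priori bounds above reproduces precisely the hypothesis \eqref{T4.1 cond}. Hence every positive solution of \eqref{e3} is automatically convex, and it remains only to produce one.

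For \(p>2\) existence would be obtained by a continuation argument along the path \(f_{t}:=(1-t)\bar f+tf\), \(t\in[0,1]\): the constant \(u_{0}=(n/\bar f)^{1/(p-2)}\) is a solution at \(t=0\); the uniform \(C^{0,1}\) estimates (together with Schauder theory) give closedness of the admissible set of \(t\); openness follows from the Fredholm alternative applied to the linearisation \(\Delta+nI-(p-1)f_{t}u_{t}^{p-2}I\), or more robustly from the Leray-Schauder degree. Since the left-hand side of \eqref{T4.1 cond} is monotone non-decreasing and the right-hand side monotone non-increasing in \(t\), the hypothesis persists along the path. Uniqueness then comes from a standard touching/sub-super-solution argument based on the strict convexity of \(t\mapsto t^{p-1}\). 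For \(p=2\) the equation with \(f\) replaced by \(\lambda f\) is the linear eigenvalue problem \(\Delta u+(n-\lambda f)u=0\); the principal eigenvalue \(\mu(\lambda)\) of \(-\Delta+(\lambda f-n)I\) depends continuously and strictly monotonically on \(\lambda\), with \(\mu(0)=-n\) and \(\mu(+\infty)=+\infty\), so there is a unique \(\lambda>0\) with \(\mu(\lambda)=0\); the associated positive first eigenfunction, unique up to a positive multiplicative constant, is the desired solution. The main obstacle is the sharpness of the constant \(n/(n-1)\) in the Bochner bound: \eqref{T4.1 cond} is tight enough that any loss of this constant would spoil the match, so the maximum-principle calculation for \(\phi\) must carefully retain the \((p-2)fu^{p-2}\phi\) term rather than bound it away.
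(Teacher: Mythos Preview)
Your proposal is correct, and the heart of it---the convexity argument---coincides with the paper's approach. Your Bochner computation for $\phi=|\D\log u|^2$ is exactly Lemma~\ref{L4.1} (the paper phrases it as a maximum-principle argument on $Q=|\D v|^2$, but the computation is identical, including the use of $u^{p-2}\le n/\min f$ from the $C^0$ bound and the sign of the $(p-2)fu^{p-2}$ term when $p\ge 2$); the oscillation bound $M_u/m_u\le\exp\bigl(\tfrac{n\pi}{n-1}\tfrac{\max|\D f|}{\min f}\bigr)$ is obtained in the same way; and your application of Theorem~\ref{T3.2} to $g=fu^{p-1}$ is precisely what the paper does, except that the paper goes back to the integral representation \eqref{t3.2} and bounds the two pieces of $g(y/|y|)-g(x)$ directly rather than quoting Theorem~\ref{T3.2} as a black box. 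The arithmetic that turns these bounds into condition \eqref{T4.1 cond} is the same.

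The one genuine difference is in existence and uniqueness: the paper does not prove these at all but simply invokes \cite{HMS04}, whereas you sketch a self-contained continuation/degree argument for $p>2$ and a principal-eigenvalue argument for $p=2$. Your sketches are sound in outline (for instance, at $t=0$ with constant $\bar f$ the linearisation is $\Delta-(p-2)nI$, which is invertible for $p>2$, so the degree is $\pm1$; and your monotonicity claim for the principal eigenvalue when $p=2$ is standard). This buys you a more self-contained proof, at the cost of extra work the paper avoids; the paper's route is shorter because only the convexity step is new.
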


When $p\ge2$, the existence and uniqueness of positive solutions
were proved in \cite{HMS04}.
We only need to show that the solution is convex under the condition \eqref{T4.1 cond}.
Note that \eqref{T4.1 cond} is for the convexity, but not for the existence and uniqueness of solutions.

We first present a lemma on the gradient estimate for the solutions to \eqref{e3}, in the case $p\ge2$.
\begin{lemma}\label{L4.1}
Let $p\ge 2$, $f\in C^{0,1}(\S^n)$  be a positive function,
and $u$ be a positive $C^{2}$ solution to \eqref{e3}.
Then
\beqs
\max_{\S^n} \frac{|\D u |}{u}\le\frac{n}{n-1} \frac{ \max_{\S^n}|\D f| }{\min_{\S^n}f}.
\eeqs
\end{lemma}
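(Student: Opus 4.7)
The plan is to set $v=\log u$ (well-defined since $u>0$) and apply Bochner's formula on $\S^n$ to $\phi := |\D v|^2 = |\D u|^2/u^2$ at its maximum point $x_0\in\S^n$. A direct calculation rewrites \eqref{e3} as
$$\Delta_{\S^n} v + |\D v|^2 + n = f u^{p-2},$$
and differentiating this identity (using $\D u = u\D v$) lets one express $\lan \D v, \D\Delta_{\S^n} v\ran$ in terms of $\D f$, $\phi$, and $\D\phi$. At $x_0$ one has $\D\phi=0$ and $\Delta\phi\le 0$; combined with Bochner on $\S^n$ (whose Ricci tensor equals $(n-1)g$), this yields the pointwise inequality
$$0 \ge |\D^2 v|^2 + u^{p-2}\lan \D v,\D f\ran + (p-2)fu^{p-2}\phi + (n-1)\phi \quad \text{at } x_0.$$

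The key observation is that for $p\ge 2$ both $|\D^2 v|^2$ and $(p-2)fu^{p-2}\phi$ are non-negative and can be discarded; Cauchy--Schwarz $-\lan \D v,\D f\ran\le \sqrt{\phi}\,\max_{\S^n}|\D f|$ then gives
$$\sqrt{\phi(x_0)}\le \frac{u(x_0)^{p-2}\,\max_{\S^n}|\D f|}{n-1}.$$
To eliminate the factor $u(x_0)^{p-2}$ I would apply the standard $C^0$ maximum principle to $u$ itself: at a point $x_1\in\S^n$ where $u$ attains its maximum, $\Delta_{\S^n} u(x_1)\le 0$ together with \eqref{e3} forces $f(x_1)u_{\max}^{p-2}\le n$, whence $u_{\max}^{p-2}\le n/\min_{\S^n} f$. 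Since $p\ge 2$, the function $t\mapsto t^{p-2}$ is non-decreasing in $t>0$, so $u(x_0)^{p-2}\le u_{\max}^{p-2}\le n/\min_{\S^n}f$; combining the two bounds yields the lemma.

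The one non-trivial step is the Bochner computation at $x_0$: one must differentiate the transformed equation, take the inner product with $\D v$, and use $\D\phi(x_0)=0$ in order to see that the only potentially negative contribution on the right is $u^{p-2}\lan\D v,\D f\ran$. Once that accounting is done correctly, the hypothesis $p\ge 2$ enters only through the sign of the $(p-2)fu^{p-2}\phi$ term, and the sharp factor $n/(n-1)$ emerges directly from the Ricci curvature of $\S^n$ in Bochner's formula together with the $C^0$ maximum-principle bound on $u$.
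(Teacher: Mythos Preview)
Your proposal is correct and is essentially the same argument as the paper's: both set $v=\log u$, work at the maximum of $|\D v|^2$, use the $C^0$ bound $u_{\max}^{p-2}\le n/\min_{\S^n}f$ from the maximum principle, drop the nonnegative terms $|\D^2 v|^2$ and $(p-2)fu^{p-2}|\D v|^2$, and conclude via Cauchy--Schwarz. The only cosmetic difference is that the paper carries out the computation in a local orthonormal frame via the Ricci identity $v_{jii}=v_{iij}+(n-1)v_j$, whereas you invoke the Bochner formula $\tfrac12\Delta|\D v|^2=|\D^2 v|^2+\lan\D v,\D\Delta v\ran+(n-1)|\D v|^2$; these are the same identity.
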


\begin{proof}

Let $v=\log u$. Since $u$ is solution to \eqref{e3}, we have
\beq\label{L4.1.1}
\Delta_{\S^n} v+|\D v|^2+n = f e^{(p-2)v} \ \ \text{on} \ \S^n.
\eeq
Let $Q= |\D v|^2$. Suppose that $Q$ attains its maximum at $x_0$.
By a rotation of the coordiantes, we may assume that $|\D v|(x_0) = v_1(x_0)$.
Hence
\beqs
0=\D_1Q (x_0)= 2v_1v_{11}.
\eeqs
Differentiating \eqref{L4.1.1} yields, at $x_0$,
\beq\label{L4.1.2}
\sum_iv_{ii1}=(\D_1 f + (p-2)fv_1)e^{(p-2)v}\ge \D_1 f u^{p-2}.
\eeq
Then, by using the Ricci identity,
\beqn
0\ge \Delta_{\S^n} Q(x_0) &=& 2\sum_{i,j} v_{jii}v_j+2\sum_{i,j}v_{ij}^2 \notag\\
&=& 2\sum_{i,j} (v_{iij} + v_j - v_i\delta_{ij})v_j+2\sum_{i,j}v_{ij}^2  \notag \\
&\ge&2v_1\sum_{i} v_{ii1}  +2(n-1)v_1^2  \notag \\
&\ge&2v_1 \D_1fu^{p-2} +2(n-1)v_1^2,\label{L4.1.3}
\eeqn
where \eqref{L4.1.2} is used in the last inequality.
We also have
\beq\label{L4.1.4}
u^{p-2}(x_0)\le (\max_{\S^n} u)^{p-2} \le n/\min_{\S^n}f,
\eeq
where the second inequality above follows by applying the maximum principle to \eqref{e3}.
Plugging \eqref{L4.1.4} into \eqref{L4.1.3}, we get
\beq\label{L4.1.5}
0\ge -2nv_1 \frac{\max_{\S^n}|\D f|}{\min_{\S^n}f}+2(n-1)v_1^2.
\eeq
Since $v_1(x_0) = \max_{\S^n}\frac{|\D u|}{u}$, we complete the proof by \eqref{L4.1.5}.
\end{proof}

We next finish the proof of Theorem \ref{T4.1}.

\begin{proof}

As aforementioned, the existence and uniqueness of positive solutions in Theorem \ref{T4.1} 
were obtained in \cite{HMS04}. We only need to prove the convexity of the solution $u$.

We extend $u$ in $\R^{n+1}$ such that it is homogeneous of degree $1$,
and extend $f$ in $\R^{n+1}$ such that it is homogeneous of degree $-p$.
Then $fu^{p-1}$, after extension, is a homogeneous function of degree $-1$.
For convenience we still use $u$ and $f$ to denote their extensions.
Then $u$ satisfies equation \eqref{e2}
with RHS replaced by $fu^{p-1}$ when $p>2$, or $\lambda fu^{p-1}$ when $p=2$.

Let us first prove part (i) of Theorem \ref{T4.1}.
Replacing $f$ by $fu^{p-1}$ in \eqref{t3.2}, we obtain,
for any $x,\xi \in\S^n$ with $x\perp\xi$,
\beqn\label{T4.1.1}
\sum u_{ij}(x)\xi_i\xi_j
&=& \frac{1}{n}fu^{p-1}(x)
+u^{p-1}(x)\int_{\R^{n+1}}  F_{\xi\xi}(x,y)|y|^{-1}\big(f(y/|y|)-f(x)\big) dy \notag \\
&&+\int_{\R^{n+1}} F_{\xi\xi}(x,y)|y|^{-1}\big(u^{p-1}(y/|y|)-u^{p-1}(x)\big) f(y/|y|) dy \notag \\
&\ge& \frac{1}{n}m_u^{p-1}\min_{\S^n}f
-M_u^{p-1} \max_{\S^n}|\D f|\int_{\R^{n+1}} | F_{\xi\xi}(x,y)||y|^{-1}\dist_{\S^n}(y/|y|,x)dy \\
&&-(p-1)M_u^{p-2}\max_{\S^n}|\D u| \max_{\S^n}f
\int_{\R^{n+1}} |F_{\xi\xi}(x,y)||y|^{-1} \dist_{\S^n}(y/|y|,x)  \notag dy.
\eeqn
where $M_u = \max_{\S^n}u, m_u = \min_{\S^n} u $, and
\beqs
{\begin{split}
F_{\xi\xi}(x,y)  & =\sum \xi_i\xi_j F_{x_ix_j}(x,y)\\ 
   & =\frac{|x-y|^2-(n+1)\lan\xi,y\ran^2}{\omega_n|x-y|^{n+3}}.
   \end{split}}
\eeqs
Since $\lan\xi,y\ran^2=\lan\xi,y-x\ran^2\le |x-y|^2$, we have
\beq\label{T4.1.2}
{\begin{split}
|F_{\xi\xi}(x,y)|  & \le \frac{1}{\omega_n|x-y|^{n+3}}\max\big\{|x-y|^2,(n+1)\lan\xi,y\ran^2\big\} \\
  & \le \frac{ (n+1)}{\omega_n|x-y|^{n+1}}.
     \end{split}}
\eeq
Plugging \eqref{T4.1.2} into \eqref{T4.1.1},
and noting that strict inequality holds in \eqref{T4.1.2} when $y\ne x+t\xi$, $t\in\R$, we obtain
\beqs
\sum u_{ij}(x)\xi_i\xi_j
>\frac1n m_u^{p-1} \min_{\S^n} f-\frac{1}{n\gamma_{n,1}} \Big(M_u^{p-1}\max_{\S^n}|\D f|
+(p-1)M_u^{p-2}\max_{\S^n}|\D u| \max_{\S^n}f\Big),
\eeqs
where $\gamma_{n,1}$ is given by \eqref{gna} (taking $\alpha=1$).
In view of Lemma \ref{L4.1},
\beqs
\max_{\S^n}|\D u| \le M_u \frac{n}{n-1}\frac{\max_{\S^n}|\D f|}{\min_{\S^n}f}.
\eeqs 
Therefore
\beq\label{T4.1.3}
\sum u_{ij}(x)\xi_i\xi_j > \Big[ \gamma_{n,1}\min_{\S^n}f 
- \Big(\frac{n(p-1)}{n-1} \frac{\max_{\S^n}f}{\min_{\S^n}f}+1\Big)
\Big( \frac{M_u}{m_u}\Big)^{p-1}
 \max_{\S^n}|\D f| \Big]\frac{ m_u^{p-1}}{n\gamma_{n,1}}.
\eeq
Note that, by using Lemma \ref{L4.1} again,
\beqs
  { \begin{split}
\frac{M_u}{m_u}
 & \le \exp\Big(\pi \max_{\S^n}\frac{|\D u|}{u}\Big) \\
 & \le \exp\Big(\frac{n\pi}{n-1}\frac{\max_{\S^n}|\D f|}{\min_{\S^n}f}\Big).
   \end{split}}
   \eeqs
This together with \eqref{T4.1.3} shows that $\sum u_{ij}(x)\xi_i\xi_j>0$,
provided \eqref{T4.1 cond} holds.
Hence the solution is convex.

For part (ii) of Theorem \ref{T4.1}, 
we know that after homogeneous extension
$u$ solves \eqref{e1} with RHS being $\lambda fu^{p-1}$.
Hence, replacing $f$ by $\lambda f$ in the above argument,
we see immediately that the solution is convex under the condition \eqref{T4.1 cond}.
\end{proof}



\end{document}